\newtheorem{theorem}{Theorem}[section]
\newtheorem{definition}[theorem]{Definition}
\newtheorem{example}[theorem]{Example}
\newtheorem{lemma}[theorem]{Lemma}
\newtheorem{proposition}[theorem]{Proposition}
\newtheorem{corollary}[theorem]{Corollary}
\newtheorem{remark}[theorem]{Remark}
\def\u{\mathfrak{A}}
\def\p{\mathcal{P}}
\def\v{\overset\vee}
\def\l{\mathcal{L}}
\def\fijn{P_{a^{n-1}}}
\begin{document}

\title{Every  Banach ideal of polynomials is compatible with an operator ideal
}

\author{Daniel Carando}

\author{Ver\'{o}nica Dimant}
\author{Santiago Muro}

\thanks{Partially supported by ANPCyT PICT 05 17-33042. The first and third authors were also partially supported by UBACyT Grant X038 and ANPCyT PICT 06
00587.}

\address{Departamento de Matem\'{a}tica - Pab I,
Facultad de Cs. Exactas y Naturales, Universidad de Buenos Aires,
(1428) Buenos Aires, Argentina and CONICET.}
\email{dcarando@dm.uba.ar}

\address{Departamento de Matem\'{a}tica, Universidad de San
Andr\'{e}s, Vito Dumas 284, (B1644BID) Victoria, Buenos Aires,
Argentina and CONICET.} \email{vero@udesa.edu.ar}

\address{Departamento de Matem\'{a}tica - Pab I,
Facultad de Cs. Exactas y Naturales, Universidad de Buenos Aires,
(1428) Buenos Aires, Argentina and CONICET.} \email{smuro@dm.uba.ar}

\begin{abstract}
We show that for each Banach ideal of homogeneous polynomials,
there exists a (necessarily unique) Banach operator ideal compatible
with it. Analogously, we prove that any ideal of $n$-homogeneous polynomials
belongs to a  coherent sequence of ideals of
$k$-homogeneous polynomials.
\end{abstract}

 \keywords{ Polynomial ideals, operator ideals} \subjclass[2000]{Primary 47H60, 47L20. Secondary 46G25}

\maketitle

\section{Introduction}

Most examples of polynomial ideals were inspired in an ideal of
operators. This is the case, for example, of the ideals of nuclear,
integral, compact, multiple $r$-summing or $r$-dominated polynomials.
However, the extension of a linear operator ideal to higher
degrees is not always obvious. For example, many extensions of the
ideal of absolutely $r$-summing operators have been developed,
among them, the absolutely, the multiple and the strongly
$r$-summing polynomials and the $r$-dominated polynomials.

The
question tackled in this article is whether every ideal of
polynomials is an extension of an ideal of operators. This is not a
precise question unless we settle what is understood by an
``extension of an ideal of operators''.

In \cite{CarDimMur09} we defined and studied
the concept of a homogeneous polynomial ideal being
\textit{compatible} with an operator ideal. This notion aims to clarify the relationship
between an operator ideal $\u$ and a polynomial ideal defined ``in
the spirit of $\u$''.
Compatibility is related with the natural operations of
fixing variables and multiplying by linear functionals.
We proved in \cite[Proposition 1.6]{CarDimMur09} that each ideal of polynomials can be compatible with, at most, one ideal of operators. On the other hand, an operator ideal is always compatible with several different polynomial ideals.
In this article we complete these results by showing that a Banch ideal of polynomials is always compatible with a (necessarily unique) ideal of operators.

The concept of coherent sequence of polynomial ideals was also
introduced in \cite{CarDimMur09}, in the spirit of Nachbin's
holomorphy types~\cite{Nac69}. It is natural to ask
if every ideal of homogeneous polynomials belongs to a coherent
sequence. We answer this question affirmatively: for a fixed ideal of $n$-homogeneous polynomials $\u_n$,
there exist polynomial ideals $\u_1,\dots,\u_{n-1},\u_{n+1},\dots$ such that, together with $\u_n$, form a coherent sequence. That is, a sequence of polynomial ideals of
degree $k$ ($1\le k\le n-1$) all ``associated'' to $\u_n$.

\bigskip

Throughout this paper $E$, $F$ and $G$ will be complex Banach
spaces. We denote by $\l(E,F)$ the Banach space of all continuous
linear operators from $E$ to $F$ and by $\p^n(E,F)$ the Banach space
of all continuous $n$-homogeneous polynomials from $E$ to $F$. If
$P\in \p^n(E,F)$, there exists a unique symmetric $n$-linear mapping
$\v P\colon\underbrace{E\times\cdots\times E}_n\to F$ such that
$$P(x)=\v P(x,\dots,x).$$

We define $P_{a^k}\in \p^{n-k}(E,F)$ by $$ P_{a^k}(x)=\v
P(a^k,x^{n-k}).$$ For $k=1$, we write $P_a$ instead of $P_{a^1}$.

Let us recall the definition of polynomial ideals
\cite{Flo01,Flo02,FloHun02}. A \textbf{normed ideal of continuous
$n$-homo\-geneous polynomials} is a pair
$(\mathfrak{A}_n,\|\cdot\|_{\mathfrak A_n})$ such that:
\begin{enumerate}
\item[(i)] $\mathfrak{A}_n(E,F)=\mathfrak A_n\cap \mathcal
P^n(E,F)$ is a linear subspace of $\p^n(E,F)$ and
$\|\cdot\|_{\u_n(E,F)}$ is a norm on it.

\item[(ii)] If $T\in \l (E_1,E)$, $P \in \u_n(E,F)$ and $S\in \l
(F,F_1)$, then $S\circ P\circ T\in \u_n(E_1,F_1)$ and $$ \| S\circ
P\circ T\|_{\u_n(E_1,F_1)}\le \| S\| \|P\|_{\u_n(E,F)} \| T\|^n$$

\item[(iii)] $z\mapsto z^n$ belongs to $\u_n(\mathbb C,\mathbb C)$
and has norm 1.
\end{enumerate}

In \cite{CarDimMur09} we defined and studied
the concept of a polynomial ideal being
compatible with an operator ideal. Here we present more results
about this topic. We first recall the definition:

\begin{definition}\label{deficompatible}
Let $\u$ be a normed ideal of linear operators. We say that the
normed ideal of $n$-homogeneous polynomials $\u_n$ is
\textbf{compatible with $\u$} if there exist positive constants
$A$ and $B$ such that for every Banach spaces $E$ and $F$, the
following conditions
hold: \\
(i) For each $P\in \u_n(E,F)$ and $a\in E$, $\fijn$ belongs to
$\u(E;F)$ and
$$\|\fijn\|_{\u(E,F)} \le A
\|P\|_{\u_n(E,F)} \|a\|^{n-1}$$
(ii) For each $T\in \u(E,F)$ and $\gamma\in E'$, $\gamma^{n-1}T$
belongs to $\u_n(E,F)$ and
$$\|\gamma^{n-1}T\|_{\u_n(E,F)}\le B \|\gamma\|^{n-1}
\|T\|_{\u(E,F)}$$
\end{definition}

All the examples of ideals of polynomials mentioned above, with the exception of the ideal of absolutely summing polynomials, are compatible with the respective ideal of
operators (see \cite[Section 1]{CarDimMur09} and Example \ref{abs sum compatibles con todos}).

\begin{remark}\label{no hace falta la norma}\rm
Although the definition of compatibility involves constants which relate the norms of the operators and the homogeneous polynomials, a simple application of the closed graph theorem shows that when the ideals are complete those constants automatically exist (see \cite{Mur10} for details). This means that if we can define the operations of fixing variables and multiplying by functionals, then they are uniformly (in the Banach spaces $E,F$) bounded.

Even though it is not necessary to obtain the constants $A$ and $B$
to show that two Banach ideals are compatible, we will also seek
``good'' constants mostly for two reasons: the first one is that
they provide a bound for the norm of the derivatives of homogeneous
polynomials in different ideals and the second is that this kind of
bounds allow us to define holomorphic mappings associated to
sequences of ideals (see \cite{CarDimMur}).
\end{remark}

\section{Existence of a compatible operator ideal}\label{seccion existe compatible}

In \cite{CarDimMur09} it was shown that for any given operator ideal there is more than one ideal of polynomials compatible with it, for example the ideals of 2-dominated and multiple 2-summing 2-homogeneous polynomials are both compatible with the ideal of absolutely 2-summing operators. Also, it was proved there that there exist at most one operator ideal compatible with a given polynomial ideal. On the other hand, not every
polynomial ideal is compatible with the commonly associated operator ideal  (e.g. the ideal absolutely 1-summing polynomials is not compatible with the ideal of absolutely 1-summing operators \cite[Example 1.15]{CarDimMur09}).

 It is natural to ask wether every polynomial ideal must have a (necessarily unique) compatible operator ideal or not.
 The following result answers this question affirmatively.
\begin{theorem} \label{existcompat}
Let $\u_n$ be a Banach ideal of $n$-homogeneous polynomials. Then
there exists a unique Banach ideal of operators $\u$ compatible
with $\u_n$. This operator ideal can be normed to obtain compatibility constants
$1\le A,B\le e$.
\end{theorem}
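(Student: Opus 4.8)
The plan is to write the compatible operator ideal down by hand --- it will be the \emph{largest} operator ideal for which ``multiplying by $\gamma^{n-1}$'' produces elements of $\u_n$ --- and then to rescale its norm so that $\mathrm{id}_{\mathbb C}$ has norm one.

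\textit{The construction.} For $T\in\l(E,F)$ put
$$
\|T\|_0:=\sup\Big\{\,\big\|\gamma^{n-1}(T\circ R)\big\|_{\u_n(G,F)}\ :\ G\text{ a Banach space},\ R\in\l(G,E),\ \gamma\in G',\ \|R\|\le1,\ \|\gamma\|\le1\,\Big\},
$$
and let $\u(E,F):=\{T\in\l(E,F):\|T\|_0<\infty\}$. Writing $\widehat T(\mu,y):=\mu^{n-1}T(y)$ for $(\mu,y)\in\mathbb C\oplus_\infty E$, the fact that $\gamma^{n-1}(T\circ R)(x)=\widehat T(\gamma(x),Rx)$ factors through $x\mapsto(\gamma(x),Rx)$ shows that $\|T\|_0=\|\widehat T\|_{\u_n(\mathbb C\oplus_\infty E,F)}$ and that $T\mapsto\widehat T$ is an isometry of $\u(E,F)$ onto a closed subspace of the Banach space $\u_n(\mathbb C\oplus_\infty E,F)$; hence $\|\cdot\|_0$ is a complete norm (using the elementary inequality $\|\cdot\|_{\p^n}\le\|\cdot\|_{\u_n}$) dominating the operator norm. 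The operator--ideal inequality $\|SUR\|_0\le\|S\|\,\|U\|_0\,\|R\|$ then splits in two: $\|S\circ U\|_0\le\|S\|\,\|U\|_0$ follows from $\gamma^{n-1}\big((S\circ U)\circ R\big)=S\circ\gamma^{n-1}(U\circ R)$ together with the ideal property of $\u_n$, while $\|U\circ R\|_0\le\|U\|_0\,\|R\|$ follows by absorbing the scalar $\|R\|$ into the supremum via linearity of $U$ (which is why one obtains the factor $\|R\|$ and not $\|R\|^n$). Thus $(\u,\|\cdot\|_0)$ is a complete normed operator ideal, except possibly for the normalization $\|\mathrm{id}_{\mathbb C}\|=1$.

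\textit{The key estimate.} The heart of the proof is an averaging identity linking the $n$-linear and the linear settings. Fix $P\in\u_n(E,F)$, $a\in E$, $R\in\l(G,E)$, $\gamma\in G'$, an integer $N\ge n$ with $N$-th roots of unity $\omega_0,\dots,\omega_{N-1}$, and $\rho>0$; define $\Theta_l\in\l(G,E)$ by $\Theta_l x:=Rx+\rho\,\omega_l\,\gamma(x)\,a$, so that $\|\Theta_l\|\le1+\rho\|a\|$ since $\|R\|,\|\gamma\|\le1$. Expanding $P(\Theta_l x)=\sum_{k=0}^n\binom nk(\rho\omega_l\gamma(x))^k\,\v P(a^k,(Rx)^{n-k})$ and averaging against $\omega_l^{-(n-1)}$ annihilates every power of $\omega_l$ except $k=n-1$, giving
$$
\gamma^{n-1}\big(P_{a^{n-1}}\circ R\big)\;=\;\frac{1}{n\rho^{n-1}N}\sum_{l=0}^{N-1}\omega_l^{-(n-1)}\,P\circ\Theta_l\;\in\;\u_n(G,F).
$$
Since $\|P\circ\Theta_l\|_{\u_n}\le(1+\rho\|a\|)^n\|P\|_{\u_n}$, this yields $\|\gamma^{n-1}(P_{a^{n-1}}\circ R)\|_{\u_n}\le\frac{(1+\rho\|a\|)^n}{n\rho^{n-1}}\|P\|_{\u_n}$, and optimizing over $\rho$ (minimum at $\rho\|a\|=n-1$) gives
$$
\|P_{a^{n-1}}\|_0\ \le\ \Big(1+\tfrac1{n-1}\Big)^{n-1}\|P\|_{\u_n}\,\|a\|^{n-1}.
$$
Hence every derivative $P_{a^{n-1}}$ lies in $\u(E,F)$, so condition (i) of Definition \ref{deficompatible} holds for $\|\cdot\|_0$ with $A_0:=(1+\tfrac1{n-1})^{n-1}<e$, while condition (ii) holds with constant $1$ straight from the definition of $\|\cdot\|_0$ (take $G=E$, $R=\mathrm{id}_E$). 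Running the same computation on $z\mapsto z^n\in\u_n(\mathbb C,\mathbb C)$ --- i.e. on products of functionals --- gives $\|\gamma^{n-1}\psi\|_{\u_n(G,\mathbb C)}\le A_0$ for norm-one functionals $\gamma,\psi$, so $K:=\|\mathrm{id}_{\mathbb C}\|_0$ satisfies $1\le K\le A_0$.

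\textit{Conclusion and the main obstacle.} Renormalize by setting $\|T\|_{\u}:=\|T\|_0/K$; this is still a complete ideal norm and now $\|\mathrm{id}_{\mathbb C}\|_{\u}=1$, so $(\u,\|\cdot\|_{\u})$ is a Banach operator ideal. Its compatibility constants with $\u_n$ become $A=A_0/K$ and $B=K$, both of which lie in $[1,A_0]\subseteq[1,e]$ because $1\le K\le A_0<e$. Uniqueness of the compatible operator ideal is \cite[Proposition 1.6]{CarDimMur09}, and the case $n=1$ is trivial ($\u=\u_1$, $A=B=1$). I expect the real difficulty to be the averaging identity together with the bookkeeping that squeezes out of it the \emph{sharp} constant $(1+\tfrac1{n-1})^{n-1}$: choosing the perturbations $\Theta_l$ so that exactly one homogeneity degree survives, and then optimizing the free radius $\rho$. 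The remaining ingredients --- the right-multiplication axiom for $\|\cdot\|_0$, and the bound $K\le A_0$ needed to keep the final constants under $e$ --- are routine once the averaging trick is in place.
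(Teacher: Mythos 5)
Your proof is correct, and it arrives at the same object as the paper: the ideal of those $T$ for which multiplication by $(n-1)$-st powers of functionals lands in $\u_n$, normed by a supremum over precompositions and normalized by the norm of $\mathrm{id}_{\mathbb C}$; your ``key estimate'' is precisely the paper's Lemma \ref{rrr} (the same roots-of-unity averaging, with the same optimal choice of radius yielding $(n/(n-1))^{n-1}\le e$), and uniqueness is quoted from \cite[Proposition 1.6]{CarDimMur09} in both cases. The genuine difference lies in how the normed-ideal structure is established. The paper first works with the norm $\sup_{\gamma\in S_{E'}}\|\gamma^{n-1}T\|_{\u_n(E,F)}$, needs two applications of the Closed Graph Theorem together with an $\ell_1$-direct-sum argument to show that this norm is ``almost ideal'' (Proposition \ref{prop quasi ideal 2}), and only afterwards passes to the supremum over right compositions to repair the ideal property (Proposition \ref{ideal norm} and Remark \ref{norma prima simpli}). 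You define that repaired norm $\|\cdot\|_0$ from the outset; because your averaging bound is uniform in $G$, $R$ and $\gamma$, the finiteness of $\|P_{a^{n-1}}\|_0$ is automatic and no closed-graph or direct-sum machinery is needed, while the two halves of the ideal inequality hold exactly with constant $1$ (the right-hand one because the scalar $\|R\|$ comes out to the first power through the linearity of $T$, as you note). Your completeness argument, via the isometry $T\mapsto\widehat T$ onto a closed subspace of $\u_n(\mathbb C\oplus_\infty E,F)$, is also different from --- and arguably cleaner than --- the paper's absolutely-convergent-series argument. Both routes produce the same ideal and the same bounds $1\le A,B\le e$ on the compatibility constants.
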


The proof will be given in several steps. First, we need the following Lemma:
\begin{lemma}\label{rrr}
Let $\u_n$ a normed ideal of $n$-homogeneous polynomials and
$P\in\u_n(E,F)$. If $T, S \in \mathcal L(G,E)$, then the
$n$-homogeneous polynomial $Q(\cdot)=\v
P(T(\cdot),\cdots,T(\cdot),S(\cdot))$ belongs to $\u_n(G,F)$.
Moreover, $\|Q\|_{\u_n(G,F)} \le e\|T\|_{\mathcal L(G,E)}^{n-1}\|S\|_{\mathcal L(G,E)}\|P\|_{\u_n(E,F)}$.

In particular, if $S \in \mathcal L(G,E)$, $\gamma_1,\dots,\gamma_k \in E'$, $k<n$ and $a\in E$, then
$\gamma_1\dots\gamma_k(P_{a^k}\circ S)\in\u_n(G,F)$; and if $\gamma\in E'$ then:
\begin{enumerate}
\item[($a$)] $\gamma^{n-1}(P_{a^{n-1}}\circ S)\in\u_n(G,F)$ with
$$\|\gamma^{n-1} (P_{a^{n-1}}\circ S)\|_{\u_n(G,F)}\le
e\|\gamma\|^{n-1}\|a\|^{n-1}\|P\|_{\u_n(E,F)}\|S\|_{\l(G,E)}.$$

\item[($b$)] $\gamma (P_{a}\circ S)\in\u_n(E,F)$ with $$\|\gamma
(P_{a}\circ S)\|_{\u_n(E,F)}\le e\|\gamma\|\|a\|\|P\|_{\u_n(E,F)}\|S\|^{n-1}.$$
\end{enumerate}
\end{lemma}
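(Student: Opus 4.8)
The plan is to exhibit $Q$ as a \emph{finite} linear combination of polynomials of the form $P\circ R$ with $R\in\l(G,E)$: once that is done, axiom (ii) in the definition of a polynomial ideal gives at once that $Q\in\u_n(G,F)$ together with a norm estimate, and no completeness of $\u_n$ is needed. (That $Q$ is a continuous $n$-homogeneous polynomial to begin with is clear, since it is the restriction to the diagonal of the bounded $n$-linear map $(x_1,\dots,x_n)\mapsto\v P(Tx_1,\dots,Tx_{n-1},Sx_n)$.) The mechanism is coefficient extraction. For a fixed $x\in G$, multilinearity and symmetry of $\v P$ give
$$P(Tx+\lambda Sx)=\v P\big((Tx+\lambda Sx)^{n}\big)=\sum_{k=0}^{n}\binom{n}{k}\lambda^{k}\,\v P\big((Tx)^{n-k},(Sx)^{k}\big),$$
so the coefficient of $\lambda^{1}$ in this degree-$n$ polynomial in $\lambda$ equals $n\,Q(x)$. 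Extracting that coefficient by a root-of-unity filter — with $\omega=e^{2\pi i/(n+1)}$ and an arbitrary radius $r>0$ — yields, for every $x\in G$,
$$Q(x)=\frac{1}{n(n+1)r}\sum_{j=0}^{n}\omega^{-j}\,P\big(Tx+r\omega^{j}Sx\big)=\frac{1}{n(n+1)r}\sum_{j=0}^{n}\omega^{-j}\big(P\circ(T+r\omega^{j}S)\big)(x).$$

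Since this holds for every $x$, it is an identity in $\p^{n}(G,F)$, namely $Q=\frac{1}{n(n+1)r}\sum_{j=0}^{n}\omega^{-j}\,P\circ(T+r\omega^{j}S)$, a fixed finite combination of polynomials $P\circ(T+r\omega^{j}S)$, each of which lies in $\u_n(G,F)$ by (ii). Hence $Q\in\u_n(G,F)$, and the triangle inequality together with $\|P\circ(T+r\omega^{j}S)\|_{\u_n(G,F)}\le\|P\|_{\u_n(E,F)}\|T+r\omega^{j}S\|^{n}\le\|P\|_{\u_n(E,F)}(\|T\|+r\|S\|)^{n}$ gives
$$\|Q\|_{\u_n(G,F)}\le\frac{1}{nr}\big(\|T\|+r\|S\|\big)^{n}\,\|P\|_{\u_n(E,F)}\qquad\text{for every }r>0.$$
It then remains to optimize the scalar bound: if $\|T\|\ne0\ne\|S\|$, the infimum over $r>0$ is attained at $r=\|T\|/((n-1)\|S\|)$ and equals $\big(1+\tfrac{1}{n-1}\big)^{n-1}\|T\|^{n-1}\|S\|\le e\,\|T\|^{n-1}\|S\|$; and if $\|T\|=0$ or $\|S\|=0$ then $Q=0$ and there is nothing to prove. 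This yields $\|Q\|_{\u_n(G,F)}\le e\,\|T\|^{n-1}\|S\|\,\|P\|_{\u_n(E,F)}$.

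For the ``in particular'' assertions I would specialize the estimate to rank-one operators. Writing $a\otimes\gamma$ for the operator $x\mapsto\gamma(x)\,a$ (of norm $\|a\|\,\|\gamma\|$), one has $\gamma(x)^{n-1}\v P(a^{n-1},Sx)=\v P\bigl(((a\otimes\gamma)x)^{n-1},Sx\bigr)$ and $\gamma(x)\,\v P(a,(Sx)^{n-1})=\v P\bigl((Sx)^{n-1},(a\otimes\gamma)x\bigr)$ (the latter after using the symmetry of $\v P$ to relocate the single distinguished slot), so $(a)$ and $(b)$ follow from what was just proved, applied with $(T,S)=(a\otimes\gamma,S)$ and with $(T,S)=(S,a\otimes\gamma)$ respectively. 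For the statement with $k<n$ functionals I would run the identical argument in the $n$ variables $\lambda_1,\dots,\lambda_n$, extracting the coefficient of $\lambda_1\cdots\lambda_n$ from $P(\lambda_1T_1x+\cdots+\lambda_nT_nx)$ by an iterated root-of-unity filter; this writes $\v P(T_1(\cdot),\dots,T_n(\cdot))$ as a finite combination of polynomials $P\circ R$ and so places it in $\u_n(G,F)$ for arbitrary $T_1,\dots,T_n\in\l(G,E)$, and the choice $T_1=a\otimes\gamma_1,\dots,T_k=a\otimes\gamma_k$, $T_{k+1}=\cdots=T_n=S$ gives the claim (no constant needs to be tracked here).

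The one genuinely delicate point is the first step — getting the coefficient-extraction identity right, in particular taking the filter long enough ($n+1$ sample points) to kill the coefficients of $\lambda^{0},\lambda^{2},\dots,\lambda^{n}$ while isolating that of $\lambda^{1}$. After that, the passage from a pointwise identity to one in the (merely normed) ideal, the triangle-inequality estimate, and the one-variable minimization producing the constant $e$ are all routine.
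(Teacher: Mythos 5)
Your proof is correct and follows essentially the same route as the paper: both write $Q$ as a finite linear combination of compositions $P\circ R_j$ via a root-of-unity filter and then invoke the ideal property. The only (cosmetic) difference is that you filter with $n+1$ roots of unity and optimize a free radius $r$ afterwards, while the paper uses $n$ roots with the weight $n-1$ built into the operators $S_j=(n-1)r^jT+S$; both yield the same constant $\left(\tfrac{n}{n-1}\right)^{n-1}\le e$.
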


\begin{proof} 
As in \cite[Corollary 1.8]{CarDimMur09}, we can write
$Q$ in the following useful way:
$$Q(x)=\frac{1}{n^2} \frac{1}{(n-1)^{n-1}} \sum_{j=0}^{n-1}r^j P\left((n-1){r^j}T(x)+S(x)\right),$$
where $r$ is a primitive $n$th root of unity. Thus, defining, for each $0\le j\le n-1$,  the
linear operator
$$S_j(x)=(n-1){r^j}T(x)+S(x),$$ we have that
$$Q=\frac{1}{n^2}\frac{1}{(n-1)^{n-1}}\sum_{j=0}^{n-1}r^j \big(P\circ
S_j\big). $$ Therefore, $Q$ belongs to $\u_n(G,F)$.

For the estimation of the norm, it is enough to consider the case
$\|S\|=\|T\|=1$. Since $\|S_j\|\le n$, for every $j=0,\dots, n-1$,
we obtain
$$\|Q\|_{\u_n(G,F)}\le \frac{1}{n^2}\frac{1}{(n-1)^{n-1}} n
\|P\|_{\u_n(G,F)} n^n=
\frac{n^{n-1}}{(n-1)^{n-1}}\|P\|_{\u_n(G,F)} \le
e\|P\|_{\u_n(G,F)}.$$

For the particular cases, just note that $\gamma^{n-1}
\left(P_{a^{n-1}}\circ S\right)(x)=\v P(\gamma(x)a,\cdots,\gamma(x)a,S(x))$, and $\gamma (P_{a}\circ S)(x)=\v P(\gamma(x)a,S(x),\cdots,S(x))$.
\end{proof}

As a consequence of this lemma we obtain the following.

\begin{lemma} \label{siunotodos}
Let $\u_n$ be an ideal of $n$-homogeneous polynomials, let $T\in
\mathcal L(E,F)$ and fix a nonzero $\gamma_0 \in E'$. Then
$\gamma_0^{n-1}T\in \u_n(E,F)$ if and only if $\gamma^{n-1}T\in
\u_n(E,F)$ for every $\gamma\in E'$.
\end{lemma}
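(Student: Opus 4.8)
The plan is to prove the non-trivial implication: if $\gamma_0^{n-1}T\in\u_n(E,F)$ then $\gamma^{n-1}T\in\u_n(E,F)$ for every $\gamma\in E'$. The reverse implication is immediate by taking $\gamma=\gamma_0$, and the case $\gamma=0$ is trivial. The idea is to recover $T$, up to a rank-one perturbation, from a derivative of $P:=\gamma_0^{n-1}T$, and then to multiply by the new functional $\gamma$, controlling each resulting term by Lemma~\ref{rrr} and the ideal axioms.

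Since $\gamma_0\neq 0$ I would fix $a\in E$ with $\gamma_0(a)=1$. Computing the symmetrization of $P=\gamma_0^{n-1}T$, i.e. $\v P(x_1,\dots,x_n)=\frac1n\sum_{i=1}^n\big(\prod_{j\neq i}\gamma_0(x_j)\big)T(x_i)$, and evaluating at $(a,\dots,a,x)$ gives
\[
P_{a^{n-1}}=\tfrac1n\big(T+(n-1)R\big),\qquad R(x):=\gamma_0(x)T(a),
\]
hence $T=n\,P_{a^{n-1}}-(n-1)R$ and
\[
\gamma^{n-1}T=n\big(\gamma^{n-1}P_{a^{n-1}}\big)-(n-1)\big(\gamma^{n-1}R\big).
\]
As $\u_n(E,F)$ is a linear subspace, it is enough to show that both terms on the right belong to it.

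The first term is precisely the polynomial appearing in part~($a$) of Lemma~\ref{rrr} with $S=\mathrm{id}_E$ (applied to $P=\gamma_0^{n-1}T\in\u_n(E,F)$), so $\gamma^{n-1}P_{a^{n-1}}\in\u_n(E,F)$. For the second term, write $\gamma^{n-1}R=\psi\circ q$, where $q(x)=\gamma(x)^{n-1}\gamma_0(x)\in\p^n(E,\mathbb C)$ and $\psi\colon\mathbb C\to F$, $\psi(z)=zT(a)$, is linear. Here $q(x)=\v u(\gamma(x),\dots,\gamma(x),\gamma_0(x))$ with $u(z)=z^n\in\u_n(\mathbb C,\mathbb C)$ (axiom~(iii)) and $\gamma,\gamma_0\in\mathcal L(E,\mathbb C)$, so Lemma~\ref{rrr} gives $q\in\u_n(E,\mathbb C)$, and then the ideal property~(ii) yields $\psi\circ q\in\u_n(E,F)$. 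Combining, $\gamma^{n-1}T\in\u_n(E,F)$, as desired.

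I do not expect a genuine obstacle: the delicate points are merely the bookkeeping in the symmetrization identity for $P_{a^{n-1}}$ and the observation that the rank-one correction $R$ must be handled through Lemma~\ref{rrr} applied to the monomial $z\mapsto z^n$ together with axioms~(ii)--(iii), rather than by invoking a compatible operator ideal, which is not yet available at this point of the paper.
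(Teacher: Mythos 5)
Your proof is correct and follows essentially the same route as the paper: it recovers $T$ (up to the rank-one term $\gamma_0(\cdot)T(a)$) from $\left(\gamma_0^{n-1}T\right)_{a^{n-1}}$ and then applies Lemma~\ref{rrr} to the main term. The only difference is cosmetic (normalizing $\gamma_0(a)=1$), and you are in fact slightly more careful than the paper in justifying, via $z\mapsto z^n$ and the ideal axioms, that the remainder $\gamma^{n-1}\gamma_0\, T(a)$ lies in $\u_n(E,F)$ --- a step the paper leaves implicit.
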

\begin{proof}
Pick $a\in E$ such that $\gamma_0(a)\ne 0$. By Lemma \ref{rrr},
$\gamma^{n-1}\left(\gamma_0^{n-1}T\right)_{a^{n-1}}\in \u_n(E,F)$.
We have
$$
\gamma^{n-1}\left(\gamma_0^{n-1}T\right)_{a^{n-1}}(x)=\frac{\gamma(x)^{n-1}}{n}
\Big(\gamma_0(a)^{n-1}T(x)+(n-1)\gamma_0(a)^{n-2}\gamma_0(x)T(a)\Big).
$$
Therefore
$$
\left(\gamma^{n-1}T\right)(\cdot)=
\frac{n}{\gamma_0(a)^{n-1}}\left(\gamma^{n-1}(\cdot)(\gamma_0^{n-1}T)_{a^{n-1}}(\cdot)
-\frac{n-1}{n}\gamma^{n-1}(\cdot)\gamma_0(\cdot)\gamma_0(a)^{n-2}T(a)\right),
$$ and then $\gamma^{n-1}T$ belongs to $\u_n(E,F)$.
\end{proof}

Now we can define, for a fixed polynomial ideal $\u_n$, an
operator ideal $\u$, and a complete norm on it. This norm also has some interesting properties that we
present in the following proposition.

\begin{proposition}\label{prop quasi ideal}
Let $\u_n$ be an ideal of $n$-homogeneous polynomials. Define, for
each pair of Banach spaces $E$ and $F$,
$$
\u(E,F) = \left\{ T\in \mathcal L(E,F) /\; \gamma^{n-1}T\in
\u_n(E,F) \textrm{ for all } \gamma\in E' \right\},
$$
with $\tri{T}_{\u(E,F)}=\sup_{\gamma\in
S_{E'}}\|\gamma^{n-1}T\|_{\u_n(E,F)}$. Then
\begin{enumerate}
\item[($a$)] $\u$ is an ideal of operators and $\u(E,F) = \left\{
P_{a^{n-1}}\in \mathcal L(E,F) /\; P\in \u_n(E,F), a\in E
\right\}$.

\item[($b$)] $\tri{\cdot}_{\u(E,F)}$ is a norm on $\u(E,F)$ and
verifies $$\tri{T}_{\u(E,F)}\ge\|T\|_{\mathcal{L}(E,F)},\quad
\textrm{for every } T\in\u(E,F).$$ Moreover,
$\Big(\u(E,F),\tri\cdot_{\u(E,F)}\Big)$ is a Banach space.

\item[($c$)] $\tri{S\circ T}_{\u(E,F_1)}\le \|S\|_{\mathcal
L(F,F_1)}\tri{T}_{\u(E,F)}$ for every $S\in\mathcal L(F,F_1)$ and
$T\in\u(E,F)$.

\item[($d$)] If $E_0$ is a subspace of $E$ with norm 1 inclusion
$i:E_0\hookrightarrow E$, then $$\tri{T\circ i}_{\u(E_0,F)}\le
\tri{T}_{\u(E,F)},\quad \textrm{for all }T\in\u(E,F).$$
\end{enumerate}
\end{proposition}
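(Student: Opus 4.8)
The plan is to verify each of the four assertions in turn, leaning heavily on Lemma \ref{rrr} and Lemma \ref{siunotodos}. For part ($a$), I would first observe that $\u(E,F)$ is well-defined and linear (this is immediate from linearity of $\gamma^{n-1}(\cdot)$ and the fact that $\u_n(E,F)$ is a subspace). To see $\u$ is an operator ideal, I would check the ideal property: if $R\in\l(E_1,E)$, $T\in\u(E,F)$, $S\in\l(F,F_1)$, then for $\gamma\in E_1'$ I need $\gamma^{n-1}(S\circ T\circ R)\in\u_n(E_1,F_1)$; writing this out, $\gamma^{n-1}(STR)(x) = S\big((\gamma(x))^{n-1}T(Rx)\big)$, and since $\gamma = \beta\circ R$ fails in general, the cleaner route is to first handle $S$ and $R$ separately using the polynomial ideal property (ii) and Lemma \ref{rrr}. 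For the identity-type condition, $z\mapsto z$ on $\mathbb C$: take $\gamma=\mathrm{id}_{\mathbb C}$, then $\gamma^{n-1}\cdot\mathrm{id} = (z\mapsto z^n)\in\u_n(\mathbb C,\mathbb C)$ by axiom (iii). The set equality $\u(E,F)=\{P_{a^{n-1}}: P\in\u_n(E,F),\, a\in E\}$ is the heart of part ($a$): the inclusion $\supseteq$ follows from compatibility-type estimate (Definition \ref{deficompatible}(i)) once we know $\u_n$ sends $P$ to $P_{a^{n-1}}$ appropriately — but actually we should derive this directly, since we are trying to \emph{establish} compatibility; the key point is that $\gamma^{n-1}(P_{a^{n-1}})\in\u_n$ by Lemma \ref{rrr}(a) (with $S=\mathrm{id}_E$), so $P_{a^{n-1}}\in\u(E,F)$. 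For $\subseteq$: given $T$ with $\gamma^{n-1}T\in\u_n(E,F)$ for all $\gamma$, fix any $\gamma$ with $\|\gamma\|=1$ and pick $a$ with $\gamma(a)=1$; then setting $P=\gamma^{n-1}T\in\u_n(E,F)$, a direct computation of $P_{a^{n-1}}$ (similar to the displayed identity in the proof of Lemma \ref{siunotodos}) expresses $T$ as a combination of $P_{a^{n-1}}$ and a rank-one term $\gamma(\cdot)^{\,\ge 1}T(a)$, and the rank-one term is itself of the form $Q_{b^{n-1}}$ for a suitable polynomial $Q\in\u_n$.

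For part ($b$), that $\tri\cdot_{\u(E,F)}$ is a norm is routine (finiteness of the supremum needs a word — it follows from Remark \ref{no hace falta la norma}, i.e., a closed graph argument, since for each fixed $\gamma$ the map $T\mapsto\gamma^{n-1}T$ into $\u_n(E,F)$ is closed, hence bounded, but uniform boundedness in $\gamma$ over $S_{E'}$ requires the Banach-space completeness hypothesis and another closed-graph/uniform-boundedness step). The inequality $\tri T_{\u(E,F)}\ge\|T\|_{\l(E,F)}$ comes from: for $\gamma\in S_{E'}$, axiom (ii) applied with the evaluation-type maps gives $\|\gamma^{n-1}T\|_{\u_n(E,F)}\ge\|\gamma^{n-1}T\|_{\p^n(E,F)}$ (since $\|\cdot\|_{\u_n}\ge\|\cdot\|_{\p^n}$, itself a standard consequence of the ideal axioms using the norm-$1$ map $z\mapsto z^n$), and then $\sup_{\gamma\in S_{E'}}\|\gamma^{n-1}T\|_{\p^n(E,F)} = \|T\|_{\l(E,F)}$ by a Hahn--Banach/homogeneity argument (at a point $x$ where $\|Tx\|$ is nearly $\|T\|\|x\|$, choose $\gamma$ peaking at $x$). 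Completeness of $(\u(E,F),\tri\cdot)$: take a $\tri\cdot$-Cauchy sequence $(T_m)$; it is $\|\cdot\|_{\l}$-Cauchy by the previous inequality, so $T_m\to T$ in $\l(E,F)$; for each fixed $\gamma$, $(\gamma^{n-1}T_m)$ is $\|\cdot\|_{\u_n}$-Cauchy hence converges in $\u_n(E,F)$, and the limit must be $\gamma^{n-1}T$ (comparing with the $\p^n$-limit); a standard $\varepsilon$-argument then shows $T\in\u(E,F)$ and $\tri{T_m - T}\to 0$.

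Parts ($c$) and ($d$) are the easy ones and I would dispatch them quickly. For ($c$): for $\gamma\in S_{E'}$, $\gamma^{n-1}(S\circ T) = S\circ(\gamma^{n-1}T)$, so by polynomial-ideal axiom (ii) (with the left operator $S$ and identity on the domain), $\|\gamma^{n-1}(S\circ T)\|_{\u_n(E,F_1)}\le\|S\|_{\l(F,F_1)}\|\gamma^{n-1}T\|_{\u_n(E,F)}$; taking the supremum over $\gamma\in S_{E'}$ gives the claim. For ($d$): given $\gamma_0\in S_{E_0'}$, extend by Hahn--Banach to $\gamma\in E'$ with $\|\gamma\|=1$ and $\gamma|_{E_0}=\gamma_0$; then $\gamma_0^{n-1}(T\circ i) = (\gamma^{n-1}T)\circ i$ as polynomials on $E_0$, so axiom (ii) (with right operator $i$, $\|i\|=1$) gives $\|\gamma_0^{n-1}(T\circ i)\|_{\u_n(E_0,F)}\le\|\gamma^{n-1}T\|_{\u_n(E,F)}\le\tri T_{\u(E,F)}$; sup over $\gamma_0$ finishes it.

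The main obstacle I expect is the set equality in ($a$) — specifically the inclusion $\u(E,F)\subseteq\{P_{a^{n-1}}: P\in\u_n, a\in E\}$ — because it requires an explicit polynomial identity recovering $T$ (up to ideal-membership) from a single derivative $(\gamma^{n-1}T)_{a^{n-1}}$, exactly of the flavor of the displayed computation in the proof of Lemma \ref{siunotodos}, together with the observation that the leftover rank-one term $x\mapsto\gamma(x)^{k}\mu(x)^{n-1-k}T(a)$ also lies in the claimed set (it is a derivative of an explicit polynomial in $\u_n$, e.g.\ a scalar multiple of $(\delta^{n}\cdot v)$-type objects, or directly $(\,\gamma(\cdot)^{n-1}T(a)\,)$ which equals $Q_{b^{n-1}}$ for $Q = \gamma^{n-1}(\,b\mapsto T(a)\,)$ composed suitably). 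A secondary subtlety is making the finiteness and uniformity of the supremum defining $\tri\cdot$ rigorous; I would either invoke Remark \ref{no hace falta la norma} directly or spell out the two-step closed-graph argument, and I would flag that the completeness hypothesis on $\u_n$ is used precisely here.
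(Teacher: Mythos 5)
Your outline follows essentially the same route as the paper: Lemma \ref{rrr} for the inclusion $\left\{P_{a^{n-1}}\right\}\subseteq\u(E,F)$, an explicit polynomial identity for the reverse inclusion, the comparison $\|\cdot\|_{\u_n}\ge\|\cdot\|_{\p^n}$ plus a peaking functional for $\tri{T}_{\u(E,F)}\ge\|T\|_{\mathcal L(E,F)}$, and Hahn--Banach extensions for ($d$). Parts ($b$), ($c$), ($d$) are fine as you describe them (the paper proves completeness via absolutely convergent series rather than Cauchy sequences, which is equivalent). However, there is a genuine gap exactly where you anticipated trouble, namely the inclusion $\u(E,F)\subseteq\left\{P_{a^{n-1}}:\ P\in\u_n(E,F),\ a\in E\right\}$. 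Your argument writes $T$ as a linear combination of $P_{a^{n-1}}$ and a rank-one remainder, and then identifies the remainder as $Q_{b^{n-1}}$ for a suitable $Q$ and a possibly different base point $b$. That only places $T$ in the \emph{linear span} of the set of derivatives; the set $\left\{P_{a^{n-1}}\right\}$ is not a priori closed under sums (that closure is essentially what is being proved), so as written you do not obtain the stated set equality. The fix is to keep everything at the same base point: choosing $a$ with $\gamma(a)=1$, one has $(\gamma^{n-1}T)_{a^{n-1}}=\frac1n\big(T+(n-1)\gamma(\cdot)T(a)\big)$ and also $\big(T(a)\gamma^{n}\big)_{a^{n-1}}=\gamma(\cdot)T(a)$, so by linearity of $P\mapsto P_{a^{n-1}}$ for fixed $a$ the \emph{single} polynomial $P=n\gamma^{n-1}T-(n-1)T(a)\gamma^{n}$, which lies in $\u_n(E,F)$ (the second summand belongs to $\u_n$ by axioms (ii) and (iii), being $\gamma^{n}$ followed by a rank-one operator), satisfies $P_{a^{n-1}}=T$. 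This is precisely the paper's argument.

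Two secondary points. First, your ``cleaner route'' for the ideal property of $\u$ (handling precomposition with $R$ via Lemma \ref{rrr}) presupposes the set equality of ($a$), since Lemma \ref{rrr} is stated for derivatives $P_{a^{n-1}}$; so either prove the set equality first, or argue as the paper does, verifying $\eta^{n-1}(S\circ T\circ R)=S\circ(\gamma^{n-1}T)\circ R$ for the single functional $\eta=\gamma\circ R$ and then invoking Lemma \ref{siunotodos} to pass to all functionals. Second, the finiteness and uniformity of the supremum defining $\tri{T}_{\u(E,F)}$ does not require the closed-graph machinery of Remark \ref{no hace falta la norma}: once $T=P_{a^{n-1}}$, Lemma \ref{rrr} ($a$) with $S=\mathrm{id}_E$ gives the explicit uniform bound $\|\gamma^{n-1}T\|_{\u_n(E,F)}\le e\|a\|^{n-1}\|P\|_{\u_n(E,F)}$ for all $\gamma\in S_{E'}$, so no extra completeness hypothesis is consumed at that step.
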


\begin{proof}

($a$) Clearly the sum and multiplication by scalars of members of
$\u$ is again in $\u$. So, to prove that $\u$ is an ideal of
operators, we have to show that it behaves well with compositions.

Consider $T\in \u(E,F)$, $R\in \mathcal L(E_1,E)$ and $S\in
\mathcal L(F,F_1)$. Let us prove that $S\circ T\circ R\in
\u(E_1,F_1)$. Let $\gamma\in E'$ such that $\gamma \circ R \ne 0$.
Then $\gamma^{n-1}T \in \u_n(E,F)$ and $\eta=\gamma \circ R\in
E_1'$. By Lemma \ref{siunotodos}, it suffices to show that
$\eta^{n-1}\big(S\circ T\circ R\big)\in \u_n(E_1,F_1)$. This
follows from the equalities:
$$
\Big(\eta^{n-1}\big(S\circ T\circ
R\big)\Big)(x)=\gamma^{n-1}\big(R(x)\big)S\big(T\big(R(x)\big)\big)
= \big(S\circ (\gamma^{n-1}T)\circ R\big)(x).
$$
Therefore $\u$ is an ideal of operators.

To prove the equivalent definition of $\u$, suppose
$T=P_{a^{n-1}}$ with $P\in \u_n(E,F)$ and $a\in E$. Then by Lemma
\ref{rrr}, $\gamma^{n-1}T$ belongs to $\u_n(E,F)$, for all
$\gamma\in E'$, and thus $T\in\u(E,F)$.

Conversely, if $T\in\u(E,F)$ then $\gamma^{n-1}T\in\u_n(E,F)$ for
every $\gamma\in E'$. Let $a\in E$ such that $\gamma(a)=1$, then
$P=n\gamma^{n-1}T-(n-1)T(a)\gamma^n$ is in $\u_n(E,F)$ and
$P_{a^{n-1}}=T$.

($b$) It is straightforward to prove that we defined a norm.

Let $T\in\u(E,F)$, take $x\in S_E$ such that
$\|T(x)\|>\|T\|_{\mathcal{L}(E,F)}-\varepsilon$ and $\gamma\in
S_{E'}$ such that $|\gamma(x)|=1$.  Then,
$$\tri{T}_{\u(E,F)}\ge\|\gamma^{n-1}
T\|_{\u_n(E,F)}\ge\|\gamma^{n-1}
T\|_{\p^n(E,F)}\ge\|\gamma(x)^{n-1}T(x)\|>\|T\|_{\mathcal{L}(E,F)}-\varepsilon.$$
Since this is true for every $\varepsilon>0$, we have that
$\tri{T}_{\u(E,F)}\ge\|T\|_{\mathcal L(E,F)}$.

Let us see that $\Big(\u(E,F),\tri\cdot_{\u(E,F)}\Big)$ is
complete. Suppose $\sum_{k\in\mathbb N}\tri{T_k}_{\u(E,F)}$ is
convergent. Then $\sum_{k\in\mathbb N}\|T_k\|_{\mathcal L(E,F)}$
is convergent. Therefore there exists $T\in\mathcal L(E,F)$ such
that $\sum_{k}T_k\to T$ in $\mathcal L(E,F)$.

For each $\gamma\in S_{E'}$, we know that $\gamma^{n-1}
T_k\in\u_n(E,F)$ and
$\|\gamma^{n-1}T_k\|_{\u_n(E,F)}\le\tri{T_k}_{\u(E,F)}$. Thus,
$\sum_{k}\gamma^{n-1} T_k$ converges in $\u_n(E,F)$ and its limit
has to be $\gamma^{n-1} T$. Therefore, $T$ belongs to $\u(E,F)$.
Moreover, since
$$
\sup_{\gamma\in S_{E'}}\left\|\gamma^{n-1}\sum_{k\ge
N}T_k\right\|_{\u_n(E,F)} \le \sup_{\gamma\in S_{E'}}\sum_{k\ge
N}\left\|\gamma^{n-1}T_k\right\|_{\u_n(E,F)}\le\sum_{k\ge
N}\tri{T_k}_{\u(E,F)}\to 0,
$$
as $N\to\infty$, we have that $\sum_{k}T_k\to T$ in
$\Big(\u(E,F),\tri\cdot_{\u(E,F)}\Big)$.

($c$) For every $S\in\mathcal L(F,F_1)$ and $T\in\u(E,F)$, we
have:

\begin{eqnarray*}
\tri{S\circ T}_{\u(E,F_1)}&=&\sup_{\gamma\in
S_{E'}}\|\gamma^{n-1}S\circ T\|_{\u_n(E,F_1)}= \sup_{\gamma\in
S_{E'}}\|S\circ(\gamma^{n-1}T)\|_{\u_n(E,F_1)}\\
&\le & \|S\|_{\mathcal L(F,F_1)}\sup_{\gamma\in
S_{E'}}\|\gamma^{n-1}T\|_{\u_n(E,F)}=\|S\|_{\mathcal
L(F,F_1)}\tri{T}_{\u(E,F)}.
\end{eqnarray*}

($d$) Let $T\in\u(E,F)$ and $\gamma\in E_0'$. Consider
$\tilde\gamma\in E'$ a Hahn-Banach extension of $\gamma$
preserving its norm. Then
$$
\Big\|\gamma^{n-1} (T\circ
i)\Big\|_{\u_n(E_0,F)}=\Big\|(\tilde\gamma\circ i)^{n-1} (T\circ
i)\Big\|_{\u_n(E_0,F)} =\big\|(\tilde\gamma^{n-1} T)\circ
i\big\|_{\u_n(E_0,F)}\le \|\tilde\gamma^{n-1} T\|_{\u_n(E,F)}.
$$
Taking supremum we have that
$$
\tri{T\circ i}_{\u(E_0,F)}\le \tri{T}_{\u(E,F)}.
$$
\end{proof}

The following proposition shows that the norm defined on $\u$ is
``almost ideal'', in the sense that satisfies the ideal condition up
to a constant.

\begin{proposition}\label{prop quasi ideal 2}
The norm $\tri\cdot_{\u}$ defined on Proposition \ref{prop quasi
ideal} verifies the ``almost ideal'' property: for Banach spaces
$E$ and $F$, there exists a constant $c>0$ such that, for all
Banach spaces $E_1,\ F_1$ and all operators
 $R\in \l (E_1,E)$, $T \in \u(E,F)$ and $S\in \l
(F,F_1)$, it follows that $$ \tri{ S\circ T\circ
R}_{\u(E_1,F_1)}\le c\| S\|_{\mathcal{L}(F,F_1)} \tri{T}_{\u(E,F)}
\| R\|_{\l (E_1,E)}.$$
\end{proposition}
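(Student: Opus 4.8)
The plan is to split the ideal inequality into its post- and pre-composition halves. The post-composition half is already available: Proposition \ref{prop quasi ideal}(c) gives $\tri{S\circ(T\circ R)}_{\u(E_1,F_1)}\le\|S\|_{\l(F,F_1)}\tri{T\circ R}_{\u(E_1,F)}$ (once we know $T\circ R\in\u(E_1,F)$, which holds because $\u$ is an operator ideal by Proposition \ref{prop quasi ideal}(a)). So the whole statement reduces to proving the single estimate
$$
\tri{T\circ R}_{\u(E_1,F)}\le c\,\tri{T}_{\u(E,F)}\,\|R\|_{\l(E_1,E)},
$$
for $T\in\u(E,F)$ and $R\in\l(E_1,E)$. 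This is the substantive step, and the reason it is not immediate is that $\tri{\cdot}_{\u}$ is defined through functionals on the \emph{source} space: a functional $\eta\in E_1'$ need not factor through $R$, so one cannot simply apply the ideal inequality of $\u_n$ to $\gamma^{n-1}T$ and then precompose with $R$.

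Before estimating, I would fix a good representation of $T$. Discarding the trivial cases ($T=0$ or $E=\{0\}$), pick $a\in E$ with $\|a\|=1$ and, by Hahn--Banach, $\gamma\in E'$ with $\|\gamma\|=1$ and $\gamma(a)=1$. As in the proof of Proposition \ref{prop quasi ideal}(a), $P:=n\gamma^{n-1}T-(n-1)T(a)\gamma^n$ lies in $\u_n(E,F)$ and $P_{a^{n-1}}=T$. Here $\|\gamma^{n-1}T\|_{\u_n(E,F)}\le\tri{T}_{\u(E,F)}$ because $\gamma\in S_{E'}$, and the polynomial $x\mapsto\gamma(x)^nT(a)$ factors as $z\mapsto zT(a)$ after $z\mapsto z^n$ after $\gamma$, so by the ideal axioms of $\u_n$ its $\u_n$-norm is at most $\|T(a)\|\le\|T\|_{\l(E,F)}\le\tri{T}_{\u(E,F)}$ (using Proposition \ref{prop quasi ideal}(b)). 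Hence $\|P\|_{\u_n(E,F)}\le(2n-1)\tri{T}_{\u(E,F)}$, with $\|a\|=1$.

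Now for $\eta\in S_{E_1'}$ I would rewrite
$$
\big(\eta^{n-1}(T\circ R)\big)(x)=\eta(x)^{n-1}\,\v P\big(a,\dots,a,R(x)\big)=\v P\big(\eta(x)a,\dots,\eta(x)a,R(x)\big),
$$
and observe that $x\mapsto\eta(x)a$ is a bounded linear operator $E_1\to E$ of norm $\|\eta\|\|a\|=1$. Applying Lemma \ref{rrr} in its general form, with this operator in the role of $T$ and $R$ in the role of $S$, gives $\eta^{n-1}(T\circ R)\in\u_n(E_1,F)$ with
$$
\big\|\eta^{n-1}(T\circ R)\big\|_{\u_n(E_1,F)}\le e\,\|R\|_{\l(E_1,E)}\,\|P\|_{\u_n(E,F)}\le e(2n-1)\,\tri{T}_{\u(E,F)}\,\|R\|_{\l(E_1,E)}.
$$
Taking the supremum over $\eta\in S_{E_1'}$ yields the displayed estimate with $c=e(2n-1)$, and combining with Proposition \ref{prop quasi ideal}(c) finishes the proof with the same constant, which depends only on $n$. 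The main obstacle is precisely this pre-composition estimate; the trick that unlocks it is to absorb the functional $\eta$ into the linear operator $x\mapsto\eta(x)a$, so that Lemma \ref{rrr} — which only involves operator norms — does the work, while choosing the representation $T=P_{a^{n-1}}$ with $\|a\|=1$ keeps $\|P\|_{\u_n}$ comparable to $\tri{T}_{\u}$.
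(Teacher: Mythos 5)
Your proof is correct, and it takes a genuinely different route from the paper's. The paper argues non-constructively: after the same reduction via Proposition \ref{prop quasi ideal}($c$), it applies the Closed Graph Theorem twice to obtain a constant $c_{E_1}$ depending on the source space, and then removes that dependence by a contradiction argument on a direct sum $E_0=\bigoplus_k E_k$ using part ($d$); the resulting constant $c$ a priori still depends on $E$ and $F$. You instead prove the pre-composition estimate directly: representing $T=P_{a^{n-1}}$ with $\|a\|=1$, $\gamma(a)=\|\gamma\|=1$ and $P=n\gamma^{n-1}T-(n-1)T(a)\gamma^n$ (the same representation used in the proof of Proposition \ref{prop quasi ideal}($a$)), you correctly bound $\|P\|_{\u_n(E,F)}\le(2n-1)\tri{T}_{\u(E,F)}$ — the term $T(a)\gamma^n$ is handled by factoring through $z\mapsto z^n$ and using part ($b$) — and the identity $\eta^{n-1}(T\circ R)=\v P\big(\eta(\cdot)a,\dots,\eta(\cdot)a,R(\cdot)\big)$ reduces everything to the general form of Lemma \ref{rrr} applied to the rank-one operator $x\mapsto\eta(x)a$. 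Every step checks out, and you obtain the explicit universal constant $c=e(2n-1)$, depending only on $n$ and not on $E$ or $F$, which is strictly more than the proposition claims. This gain costs nothing elsewhere, since the compatibility constants in Theorem \ref{existcompat} come from the renormalization by $\kappa$ in Remark \ref{norma prima simpli} rather than from this $c$; the paper's softer argument has the advantage of working for any norm satisfying ($c$) and ($d$) without needing the structure of $\u_n$, but your computation is the more informative one here.
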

\begin{proof}
By Proposition \ref{prop quasi ideal} ($c$), we have that
$$
\tri{ S\circ T\circ R}_{\u(E_1,F_1)}\le \| S\|_{\mathcal{L}(F,F_1)}
\tri{T\circ R}_{\u(E_1,F)}.
$$

For a fixed Banach space $E_1$ and a fixed operator $R\in\l
(E_1,E)$, consider
$$
\begin{array}{ccc}
 \left(\u(E,F),\tri\cdot_{\u(E,F)}\right) & \to  & \left(\u(E_1,F),\tri\cdot_{\u(E_1,F)}\right) \\
  T & \mapsto & T\circ R
\end{array}
$$
An application of the Closed Graph Theorem gives the existence of
a constant $c_{E_1,R}>0$ such that
$$
\tri{T\circ R}_{\u(E_1,F)}\le c_{E_1,R}\tri T_{\u(E,F)}.
$$
If we apply again the Closed Graph Theorem for
$$
\begin{array}{ccc}
 \l(E_1,E) & \to  & \l(\u(E,F),\u(E_1,F)) \\
  R & \mapsto & \theta_R(T)=T\circ R,
\end{array}
$$
we obtain that there is a constant $c_{E_1}>0$ such that
\begin{equation}\label{grafico cerrado}
\tri{T\circ R}_{\u(E_1,F)}\le c_{E_1}\tri
T_{\u(E,F)}\|R\|_{\l(E_1,E)}.
\end{equation}

Now suppose that the result is not true. Then there are Banach
spaces $E_k$, and $R_k\in\l(E_k,E)$, $\|R_k\|_{\l(E_k,E)}=1$, for
all $k\in\mathbb N$, such that
$$
\tri{T\circ R_k}_{\u(E_k,F)}>k.
$$
Let $E_0=\bigoplus_{k\in \mathbb N}E_k$, and $\tilde
R_k\in\l(E_0,E)$, $\tilde R_k=R_k\circ \pi_k$, where $\pi_k:E_0\to
E_k$ is the (norm one) projection. Denote by
$i_k:E_k\hookrightarrow E_0$ the (norm one) inclusion. So we have

\begin{eqnarray*}
k&<&\tri{T\circ R_k}_{\u(E_k,F)}=\tri{T\circ R_k\circ\pi_k\circ
i_k}_{\u(E_k,F)}\\
&=&\tri{T\circ \tilde R_k\circ i_k}_{\u(E_k,F)}\le \tri{T\circ
\tilde R_k}_{\u(E_0,F)},
\end{eqnarray*}
the last inequality following from Proposition \ref{prop quasi
ideal}($d$). Also, by  (\ref{grafico cerrado}),
$$
\tri{T\circ \tilde R_k}_{\u(E_0,F)}\le c_{E_0}\tri
T_{\u(E,F)}\|\tilde R_k\|_{\l(E_0,E)}\le c_{E_0}\tri T_{\u(E,F)},
$$
which leads to a contradiction.
\end{proof}

Now we present a result that shows how to convert an ``almost
ideal'' norm into an ideal norm.

\begin{proposition}\label{ideal norm}
Let $\u$ be an operator ideal with norm $\tri\cdot_{\u}$ that
verifies the ``almost ideal'' property. Then we can define an
equivalent norm $\|\cdot\|_{\u}$ which is an ideal norm on $\u$.
\end{proposition}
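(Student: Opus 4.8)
The plan is to produce the ideal norm by an explicit ``supremum over compositions'' (or infimum over factorizations) construction, exactly the standard trick for turning a quasi-ideal norm into an ideal norm. Concretely, for $T\in\u(E,F)$ I would set
$$
\|T\|_{\u(E,F)}=\sup\Big\{\tri{S\circ T\circ R}_{\u(E_1,F_1)}\;:\;E_1,F_1 \text{ Banach},\ \|R\|_{\l(E_1,E)}\le 1,\ \|S\|_{\l(F,F_1)}\le 1\Big\}.
$$
Taking $R=\mathrm{id}_E$ and $S=\mathrm{id}_F$ shows $\|T\|_{\u}\ge\tri{T}_{\u}$, and the ``almost ideal'' property from Proposition \ref{prop quasi ideal 2} shows $\|T\|_{\u}\le c\tri{T}_{\u}$, so the two norms are equivalent (in particular $\|\cdot\|_{\u}$ is finite, and $\u$ with this norm is again a Banach space, being equivalent to a Banach norm).

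Next I would check the three ideal axioms for $\|\cdot\|_{\u}$. The key point is that the supremum is taken over \emph{all} factorizations, so it is ``saturated'': for operators $R_0\in\l(E_0,E)$, $T\in\u(E,F)$, $S_0\in\l(F,F_0)$ one has
$$
\|S_0\circ T\circ R_0\|_{\u(E_0,F_0)}=\sup_{\|R\|\le 1,\|S\|\le 1}\tri{S\circ(S_0\circ T\circ R_0)\circ R}_{\u}
=\sup_{\|R\|\le 1,\|S\|\le 1}\tri{(S\,S_0)\circ T\circ(R_0\,R)}_{\u},
$$
and since $\|R_0 R\|\le\|R_0\|\,\|R\|$ and $\|S S_0\|\le\|S\|\,\|S_0\|$, each term in this last supremum is bounded by $\|S_0\|\,\|R_0\|$ times a term appearing in the supremum defining $\|T\|_{\u}$; hence $\|S_0\circ T\circ R_0\|_{\u}\le\|S_0\|\,\|T\|_{\u}\,\|R_0\|$, which is the ideal inequality. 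The triangle inequality and homogeneity for $\|\cdot\|_{\u}$ are immediate since $\tri\cdot_{\u}$ is a norm and a supremum of norms (finite by the equivalence just established) is a norm; positivity follows from $\|T\|_{\u}\ge\tri{T}_{\u}\ge\|T\|_{\l}$. Finally, for the normalization axiom on $\p^1(\mathbb C,\mathbb C)$ one uses that $\mathrm{id}_{\mathbb C}$ has $\tri{\mathrm{id}_{\mathbb C}}_{\u}=1$ (from the normalization of $\u_n$, as $\gamma^{n-1}\mathrm{id}_{\mathbb C}$ is $z\mapsto z^n$ up to a unimodular scalar when $\|\gamma\|=1$) and that composing with norm-one operators out of and into $\mathbb C$ cannot increase the relevant quantity — one checks $\|\mathrm{id}_{\mathbb C}\|_{\u}=1$ directly, or rescales; I would present this carefully since it is where one must keep constants equal to $1$ rather than merely bounded.

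The main obstacle I anticipate is purely set-theoretic bookkeeping: the defining supremum ranges over a proper class of Banach spaces $E_1,F_1$, so one should note (as in the proof of Proposition \ref{prop quasi ideal 2}, via the direct-sum trick) that it suffices to test against a single sufficiently large space — e.g. replace arbitrary $R\in\l(E_1,E)$ by $R\circ\pi$ for $\pi$ a projection of $\bigoplus_k E_k$, using Proposition \ref{prop quasi ideal}($d$) — so the supremum is genuinely a supremum over a set and is attained-up-to-$\varepsilon$ by maps into/out of fixed spaces. Once that technicality is dispatched, everything else is the routine verification above. The end result is that $(\u,\|\cdot\|_{\u})$ is a genuine Banach operator ideal equivalent to $(\u,\tri\cdot_{\u})$, as claimed.
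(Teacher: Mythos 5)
Your construction is the paper's own: the same supremum $\sup\tri{S\circ T\circ R}_{\u(E_1,F_1)}$ over norm-one factors, the same equivalence via the almost-ideal constant, and the same saturation argument for the ideal inequality. The one point to be careful about is the normalization: $\mathrm{id}_{\mathbb C}$ need not have norm $1$ under this supremum (Remark \ref{norma prima simpli} shows the value $\kappa$ can be anywhere in $[1,e]$), so the rescaling by $\kappa=\|\mathrm{id}_{\mathbb C}\|'_{\u(\mathbb C,\mathbb C)}$ that you mention only as an alternative is in fact the required final step, exactly as in the paper.
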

\begin{proof}
We first define a norm $\|\cdot\|'_{\u}$ in the following way. For
$T\in\u(E,F)$, let
$$\|T\|'_{\u(E,F)}=\sup\{\tri{S\circ T\circ R}_{\u(E_1,F_1)}:\
E_1,F_1 \textrm{ Banach spaces,
}\|S\|_{\l(F,F_1)}=\|R\|_{\l(E_1,E)}=1\}.
$$
It is easy to see that $\|\cdot\|'_{\u}$ is a norm on $\u$
equivalent to $\tri\cdot_{\u}$. Also, it is clear that verifies
the ideal property:
$$
\|S\circ T\circ R\|'_{\u(E_1,F_1)}\leq \|S\|_{\l(F,F_1)}
\|T\|'_{\u(E,F)}\|R\|_{\l(E_1,E)}.
$$
Last, if $\kappa=\|id_{\mathbb{C}}\|'_{\u(\mathbb{C},\mathbb{C})}$
then the norm $\|\cdot\|_{\u}$ defined by
$$
\|T\|_{\u(E,F)}=\frac{1}{\kappa}\|T\|'_{\u(E,F)}
$$
is an ideal norm equivalent to $\tri\cdot_{\u}$.
\end{proof}

\begin{remark}\label{norma prima simpli}\rm
When applying the previous proposition to our
context (that is, $\u_n$ a polynomial ideal and
$(\u,\tri\cdot_{\u})$ as in Proposition \ref{prop quasi ideal}),
using Proposition \ref{prop quasi ideal} $(ii)$, we can simplify
the definition of $\|\cdot\|'_{\u}$:
$$\|T\|'_{\u(E,F)}=\sup\{\tri{ T\circ R}_{\u(E_1,F)}:\
E_1 \textrm{ Banach space, }\|R\|_{\l(E_1,E)}=1\}.
$$
Then considering
$$\|T\|_{\u(E,F)}=\frac{\|T\|'_{\u(E,F)}}{\|id_{\mathbb{C}}\|'_{\u(\mathbb{C},\mathbb{C})}}
$$
we obtain an ideal norm on $\u$ equivalent to $\tri\cdot_{\u}$.
Moreover,
\begin{eqnarray*}
\kappa &=&\|z\mapsto z\|'_{\u(\mathbb{C},\mathbb{C})} =\sup\{\tri{(z\mapsto z)\circ \varphi}_{\u(E_1,\mathbb C)}:\
E_1 \textrm{ Banach space, }\varphi\in S_{E_1'}\}\\ &=&\sup\{\tri{\varphi}_{\u(E_1,\mathbb C)}:\
E_1 \textrm{ Banach space, }\varphi\in S_{E_1'}\}\\ & =&\sup\{\|\gamma^{n-1}\varphi\|_{\u_{n}(E_1,\mathbb C)}:\
E_1 \textrm{ Banach space, }\varphi,\gamma\in S_{E_1'}\}.
\end{eqnarray*}
 Thus by \cite[Corollary 1.8]{CarDimMur09}, we have that
$1\le\kappa\le e$.\qed
\end{remark}

\smallskip
We now can prove the existence, for any polynomial ideal, of a compatible operator ideal:

\begin{proof}(of Theorem \ref{existcompat})
Consider the normed ideal $(\u,\|\cdot\|_{\u})$, with $$\u(E,F) =
\left\{ T\in \mathcal L(E,F) /\; \gamma^{n-1}T\in \u_n(E,F)
\textrm{ for all } \gamma\in E' \right\}$$ and $\|\cdot\|_{\u}$
given by Remark \ref{norma prima simpli} $(ii)$. By the
equivalence with $\tri\cdot_{\u}$ and Proposition \ref{prop quasi
ideal} $(b)$, for each $E$ and $F$ Banach,
$\big(\u(E,F),\|\cdot\|_{\u(E,F)}\big)$ is a Banach space.

Let us check that $\u_n$ is compatible with $\u$.

It is clear, by
definition, that if $T\in \u(E,F)$ and $\gamma \in E'$ then
$\gamma^{n-1}T\in \u_n(E,F)$. On the other hand take $P\in \u_n(E,F)$
and $a\in E$. By Proposition \ref{prop quasi ideal} ($a$),
$P_{a^{n-1}}$ belongs to $\u(E,F)$. By Remark~\ref{no hace falta la norma} we conclude that $\u_n$ is compatible with $\u$. We can moreover estimate the constants of compatibility. For the first one, by Lemma \ref{rrr} ($a$),
$$
\|P_{a^{n-1}}\|_{\u(E,F)}=\frac1{\kappa}\sup_{\overset{E_1\;Banach}{R\in
S_{\l(E_1,E)}}}\sup_{\|\gamma\|=1}\Big\|\gamma^{n-1}(P_{a^{n-1}}\circ
R)\Big\|_{\u_n(E_1,F)} \le
\frac{e}{\kappa}\|a\|^{n-1}\|P\|_{\u_n(E,F)}.
$$
For the other constant we have,
$$
\|\gamma^{n-1}T\|_{\u_n(E,F)} =\|\gamma\|^{n-1}
\left\|\frac{\gamma^{n-1}}{\|\gamma\|^{n-1}}T\right\|_{\u_n(E,F)}
\le \|\gamma\|^{n-1}\tri{T}_{\u(E,F)}\le
\kappa\|\gamma\|^{n-1}\|T\|_{\u(E,F)}.
$$

The fact that $\u$ is the only ideal of operators compatible with $\u_n$ follows from \cite[Proposition 1.6]{CarDimMur09}.
\end{proof}

 We have proved that every polynomial Banach ideal is compatible with a unique Banach operator ideal. On the other hand, \cite[Example 1.15]{CarDimMur09} shows that the ideal of absolutely 1-summing polynomials is not compatible with the ideal of absolutely 1-summing operators. Then the question that comes up now is which is the ideal of linear operators which is compatible with the absolutely $1$-summing polynomials.

As the following example shows, the unique compatible operator ideal
may be far from ``natural''. Note,
however, that this unnatural compatibility has some interesting
consequences.

\begin{example}\label{abs sum compatibles con todos}
The ideal $\Pi_{p}^n$ of absolutely-$p$-summing $n$-homogeneous
polynomials is compatible with $\mathcal{L}$, the ideal of
continuous linear operators, with constants $A=e$ and $B=1$.
\end{example}
\begin{proof}
Obviously, for $P\in\Pi_p^n(E,F)$ and $a\in E$, the operator
$P_{a^{n-1}}$ belongs to $\mathcal{L}(E,F)$ and
$$
\|\fijn\|_{\mathcal{L}(E,F)} \le e \|P\|_{\mathcal{P}^n(E,F)}
\|a\|^{n-1}\le e \|P\|_{\Pi_p^n(E,F)} \|a\|^{n-1}.
$$

\noindent For the other condition, let $T\in\mathcal{L}(E,F)$ and
$\gamma\in E'$, then, for all $x_1,\dots,x_m\in E$,
\begin{eqnarray*}
\Big(\sum_{j=1}^m
\big\|(\gamma^{n-1}T)(x_j)\big\|^p\Big)^\frac1{p}&\le &
\|\gamma\|\left(\sum_{j=1}^m
\Big(\frac{|\gamma(x_j)|}{\|\gamma\|}\|\gamma\|^{n-2}
\|T\|\|x_j\|^{n-1}\Big)^p\right)^\frac1{p} \\
&\le&
\|\gamma\|^{n-1}\|T\|\left(\sum_{j=1}^m
\Big(\frac{|\gamma(x_j)|}{\|\gamma\|}\Big)^p\right)^\frac1{p} \Big(\max_{1\le j\le m}\|x_j\|\Big)^{n-1}\\
&\le& \|\gamma\|^{n-1}\|T\|\sup_{x'\in B_{E'}} \left(\sum_{j=1}^m
|x'(x_j)|^p\right)^{\frac{n}{p}} = \|\gamma\|^{n-1}\|T\|\omega_p\big((x_j)_{j=1}^m\big)^n.
\end{eqnarray*}

Thus, $\gamma^{n-1}T$ is absolutely $p$-summing and
$$
\|\gamma^{n-1}T\|_{\Pi_p^n(E,F)}\le \|T\|_{\mathcal{L}(E,F)}
\|\gamma\|^{n-1}.
$$
\end{proof}

\begin{corollary}
Suppose that $\Pi_p^n(E,F)\subset \u_n(E,F)$ and that $\u_n$ is compatible with $\u_1$. Then $\u_1(E,F)=\mathcal L(E,F)$.
\end{corollary}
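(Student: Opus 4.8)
The plan is to prove the nontrivial inclusion $\mathcal L(E,F)\subset\u_1(E,F)$ by fixing a point together with a norming functional and recovering an arbitrary operator as a derivative of an explicit polynomial. If $E=\{0\}$ both sides are $\{0\}$, so I assume $E\neq\{0\}$, fix $a\in E$ and (by Hahn--Banach) $\gamma\in E'$ with $\gamma(a)=1$, and let $T\in\mathcal L(E,F)$ be arbitrary.

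First I would use the computation carried out in the proof of Example~\ref{abs sum compatibles con todos}: for any $\eta\in E'$ the polynomial $\eta^{n-1}T$ is absolutely $p$-summing, i.e.\ $\eta^{n-1}T\in\Pi_p^n(E,F)$, so by hypothesis $\eta^{n-1}T\in\u_n(E,F)$. In particular $\gamma^{n-1}T\in\u_n(E,F)$. Since $\u_n$ is compatible with $\u_1$, condition (i) of Definition~\ref{deficompatible} applied to $P:=\gamma^{n-1}T\in\u_n(E,F)$ and to the point $a$ yields $P_{a^{n-1}}\in\u_1(E,F)$.

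The key point now is the identity recovering $T$ from $P_{a^{n-1}}$. Polarizing $P(x)=\gamma(x)^{n-1}T(x)$ gives $\v P(x_1,\dots,x_n)=\frac1n\sum_{i=1}^n\big(\prod_{j\ne i}\gamma(x_j)\big)T(x_i)$, so evaluating at $(a,\dots,a,x)$ and using $\gamma(a)=1$ we get $P_{a^{n-1}}(x)=\frac1n\big(T(x)+(n-1)\gamma(x)T(a)\big)$, whence
$$
T=n\,P_{a^{n-1}}-(n-1)\,u,\qquad u(x):=\gamma(x)\,T(a).
$$
The rank-one operator $u$ factors through $\mathbb C$ as $x\mapsto\gamma(x)\mapsto\gamma(x)T(a)$, hence belongs to $\u_1(E,F)$ (every operator ideal contains all finite-rank operators), and $P_{a^{n-1}}\in\u_1(E,F)$ by the previous step; therefore $T\in\u_1(E,F)$. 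This proves $\mathcal L(E,F)\subset\u_1(E,F)$, and the reverse inclusion is automatic, so $\u_1(E,F)=\mathcal L(E,F)$.

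I do not anticipate a real obstacle: the only mildly delicate points are the polarization formula (routine) and the remark that an operator ideal contains the finite-rank operators. One could also argue more abstractly: both $\u_1$ and the operator ideal $\u$ constructed from $\u_n$ in Proposition~\ref{prop quasi ideal} are compatible with $\u_n$, so by the uniqueness part of Theorem~\ref{existcompat} they coincide on $(E,F)$; since $\u(E,F)=\{\,S\in\mathcal L(E,F):\eta^{n-1}S\in\u_n(E,F)\ \text{for all}\ \eta\in E'\,\}$ and, by Example~\ref{abs sum compatibles con todos} together with $\Pi_p^n(E,F)\subset\u_n(E,F)$, every $T\in\mathcal L(E,F)$ satisfies this condition, we again conclude $\mathcal L(E,F)\subset\u_1(E,F)$.
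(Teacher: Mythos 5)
Your proof is correct, but it takes a different route from the paper: the paper disposes of this corollary in one line by invoking \cite[Proposition 1.6]{CarDimMur09} (the monotonicity/uniqueness result for compatible ideals), applied to the pair of compatible couples $(\Pi_p^n,\mathcal L)$ and $(\u_n,\u_1)$ together with the inclusion $\Pi_p^n(E,F)\subset\u_n(E,F)$. You instead give a self-contained argument: by the computation of Example \ref{abs sum compatibles con todos}, $\gamma^{n-1}T\in\Pi_p^n(E,F)\subset\u_n(E,F)$ for every $T\in\mathcal L(E,F)$; compatibility condition (i) then puts $\left(\gamma^{n-1}T\right)_{a^{n-1}}$ in $\u_1(E,F)$; and the identity
$$\left(\gamma^{n-1}T\right)_{a^{n-1}}(x)=\tfrac1n\bigl(T(x)+(n-1)\gamma(x)T(a)\bigr)\qquad(\gamma(a)=1)$$
recovers $T$ up to a rank-one operator, which lies in any operator ideal. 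This is exactly the polarization identity the paper itself uses in the proof of Lemma \ref{siunotodos}, so in effect you are re-proving the relevant special case of the cited Proposition 1.6 rather than quoting it; your version has the advantage of being self-contained (and could easily be made quantitative by tracking the constants $A$ and the norm of the rank-one part), while the paper's citation is shorter and makes clear that the statement is an instance of a general monotonicity principle. Your alternative abstract argument via the uniqueness of the compatible ideal and the explicit description $\u(E,F)=\{S:\eta^{n-1}S\in\u_n(E,F)\ \forall\eta\}$ from Proposition \ref{prop quasi ideal} is also valid and is essentially the paper's argument in disguise, since the uniqueness in Theorem \ref{existcompat} rests on the same Proposition 1.6.
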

\begin{proof}
This is just a special case of \cite[Proposition 1.6]{CarDimMur09}.
\end{proof}

It is well known that every absolutely summing operator is weakly compact (see for example \cite[Theorem 2.17]{DieJarTon95}). In \cite{Bot02} it was shown that not every dominated polynomial is weakly compact by exhibiting an example of a polynomial from $\ell_1$ to $\ell_1$. We now show how the concept of compatible ideals can be easily applied to prove that not every absolutely $p$-summing homogeneous polynomial is weakly compact.

\begin{corollary}\label{absolutamente sumantes y debil compactos}
$E$ is reflexive if and only if, for some $n\ge 2$, every absolutely $p$-summing $n$-homogeneous polynomial from $E$ to $E$ is weakly compact.
\end{corollary}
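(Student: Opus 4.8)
The plan is to prove the equivalence in Corollary \ref{absolutamente sumantes y debil compactos} by moving back and forth between the polynomial $\Pi_p^n$ and the operator ideal $\mathcal L$, using the compatibility established in Example \ref{abs sum compatibles con todos}. For the easy implication, suppose $E$ is reflexive. Then every operator $T\in\mathcal L(E,E)$ is weakly compact (a bounded set in $E$ is relatively weakly compact since $B_E$ is weakly compact by reflexivity), and hence every $n$-homogeneous polynomial $P\in\p^n(E,E)$ is weakly compact: indeed $P(B_E)$ is the continuous image of the weakly compact set $B_E$ under a map that is weak-to-weak continuous on bounded sets (or, more directly, $P$ factors through its linearization on the symmetric tensor product, or one simply uses that $P(B_E)\subset \v P(B_E,\dots,B_E)$ and boundedness). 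In particular every absolutely $p$-summing $n$-homogeneous polynomial from $E$ to $E$ is weakly compact, for every $n\ge2$.

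For the converse, I would argue by contrapositive: assume $E$ is not reflexive and show that for each $n\ge 2$ there is a non-weakly-compact absolutely $p$-summing $n$-homogeneous polynomial $P\colon E\to E$. Since $E$ is not reflexive, there is a non-weakly-compact operator $T\in\mathcal L(E,E)$; the simplest choice is $T=\mathrm{id}_E$, which is weakly compact if and only if $E$ is reflexive. Now pick any nonzero $\gamma\in E'$ and set $P=\gamma^{n-1}T=\gamma^{n-1}\,\mathrm{id}_E$. By Example \ref{abs sum compatibles con todos} (condition (ii) there, with $B=1$), $P=\gamma^{n-1}\mathrm{id}_E$ is absolutely $p$-summing, i.e. $P\in\Pi_p^n(E,E)$. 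It remains to check $P$ is not weakly compact.

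The main obstacle is precisely this last point: showing $\gamma^{n-1}\mathrm{id}_E$ fails to be weakly compact when $E$ is not reflexive. The idea is that $P$ essentially ``contains a copy'' of the identity. Concretely, fix $a\in E$ with $\gamma(a)=1$; then the derivative $P_{a^{n-1}}$ is, up to a nonzero scalar and a rank-one perturbation, the identity of $E$ (this is the computation appearing in the proof of Lemma \ref{siunotodos}: $\gamma^{n-1}(\gamma^{n-1}\mathrm{id})_{a^{n-1}}$ and the subsequent formula reconstruct $\mathrm{id}_E$ from $P_{a^{n-1}}$ by composing and subtracting a finite-rank term). Since weakly compact polynomials have weakly compact derivatives (fixing variables in a polynomial amounts to composing with fixed operators, which preserves weak compactness), if $P$ were weakly compact then $P_{a^{n-1}}$ would be a weakly compact operator, and then so would $\mathrm{id}_E = $ (scalar)$\cdot P_{a^{n-1}} + $ (finite rank), contradicting non-reflexivity of $E$. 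Thus $P$ is the desired non-weakly-compact absolutely $p$-summing $n$-homogeneous polynomial, and the proof is complete. A small technical wrinkle to address carefully is the precise statement ``weakly compact polynomial $\Rightarrow$ weakly compact derivative''; this follows by writing $P_{a^{n-1}}(x)=\v P(a^{n-1},x)$ as a composition of $P$ with fixed continuous linear maps (via the polarization-type formula used in Lemma \ref{rrr}, or directly from the definition of the Aron--Berner-type extension of $\v P$), since the class of weakly compact operators, and of weakly compact polynomials, is stable under composition on either side with bounded operators.
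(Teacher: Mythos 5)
Your proof is correct, and its mathematical content coincides with the paper's, but you package it differently. The paper argues at the level of ideals: it notes that $\mathcal P^n_{WK}$ is compatible with $\mathcal L_{WK}$, and then invokes the preceding corollary (i.e.\ the uniqueness-of-compatible-ideal machinery from \cite[Proposition 1.6]{CarDimMur09}) applied to the inclusion $\Pi_p^n(E,E)\subset \mathcal P^n_{WK}(E,E)$ to conclude $\mathcal L(E,E)=\mathcal L_{WK}(E,E)$, hence reflexivity. You instead inline that abstract argument for the single witness $T=\mathrm{id}_E$: by Example \ref{abs sum compatibles con todos}(ii) the polynomial $\gamma^{n-1}\mathrm{id}_E$ is $p$-summing, and if it were weakly compact then its derivative $P_{a^{n-1}}$ (a weakly compact operator, by compatibility of $\mathcal P^n_{WK}$ with $\mathcal L_{WK}$, or directly by polarization) would reconstruct $\mathrm{id}_E$ up to a scalar multiple and a rank-one perturbation --- exactly the computation in Lemma \ref{siunotodos} --- forcing $\mathrm{id}_E$ to be weakly compact. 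What your version buys is self-containedness: one explicit non--weakly-compact $p$-summing polynomial, with no appeal to the external uniqueness result; what the paper's version buys is brevity and the stronger structural statement $\mathcal L(E,F)=\mathcal L_{WK}(E,F)$ under the hypothesis, which it reuses in the remark that follows. Your ``technical wrinkle'' (weakly compact polynomial $\Rightarrow$ weakly compact derivative) is precisely condition (i) of the compatibility of $\mathcal P^n_{WK}$ with $\mathcal L_{WK}$ that the paper declares ``easy to prove,'' and your polarization justification of it is sound.
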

\begin{proof}
It easy to prove that
the ideal of weakly compact homogeneous polynomials, $\mathcal P^n_{WK}$, is compatible with
the ideal of weakly compact operators, $\mathcal L_{WK}$. 
Suppose that $\Pi_p^n(E,E)\subset\p_{WK}^n(E,E)$.
Then, by the previous corollary, we have that $\mathcal{L}(E,E)=\mathcal L_{WK}(E,E)$ and thus $E$ must be reflexive.

Conversely, if $E$ is reflexive, every homogeneous polynomial from $E$ to $E$ is weakly compact.
\end{proof}
\medskip

Analogously we can prove that if every absolutely $p$-summing
$n$-homogeneous polynomial from $E$ to $F$ is weakly compact (for
some $n\ge 2$), then every linear operator
from $E$ to $F$ is weakly compact.

\section{Coherent sequences of polynomial ideals}

In \cite{CarDimMur09} we also defined coherent sequences of polynomial ideals:

\begin{definition}\label{deficoherente}
Consider the sequence $\{\u_k\}_{k=1}^N$, where for each $k$, $\u_k$
is an ideal of $k$-homogeneous polynomials and $N$ is eventually
infinite. We say that $\{\u_k\}_k$ is a \textbf{coherent sequence of
polynomial ideals} if there exist positive constants $C$ and $D$
such that for every Banach
spaces $E$ and $F$, the following conditions hold for $k=1,\dots, N-1$: \\
(i) For each $P\in \u_{k+1}(E,F)$ and $a\in E$, $ P_a$ belongs to
$\u_k(E;F)$ and
$$\|P_a\|_{\u_{k}(E,F)} \le C
\|P\|_{\u_{k+1}(E,F)} \|a\|$$ 
(ii) For each $P\in \u_k(E,F)$ and $\gamma\in E'$, $\gamma P$
belongs to $\u_{k+1}(E,F)$ and
$$\|\gamma P\|_{\u_{k+1}(E,F)}\le D \|\gamma\|
\|P\|_{\u_k(E,F)}$$
\end{definition}

It is shown in \cite{CarDimMur09} that, given an operator ideal $\u$, there are many coherent sequences $\{\u_k\}_k$ such that $\u_1=\u$. On the other hand, an ideal of $n$-homogeneous polynomials $\u_n$ there can be at most one coherent sequence $\{\u_1,\u_2,\dots,\u_n\}$. In other words, all coherent sequences with the same $n$-homogeneous ideal, must have the same $k$-homogeneous ideals for $1\le k\le n$.
As in the case of compatibility, not all the usual polynomial extensions of an
operator ideal form a coherent sequence.
Indeed, an argument similar to the proof of Example \ref{abs sum compatibles con todos} proves:
\begin{example}\rm
The sequence $\{\l,\p^2,\dots,\p^{n-1},\Pi^{n}_p\}$ is coherent with constants $C=e$ and $D=1$.
\end{example}
From this example and \cite[Proposition 1.6]{CarDimMur09} we
can prove that if every absolutely $p$-summing $n$-homogeneous polynomial from $E$ to $F$ is weakly compact, then every $k$-homogeneous polynomial from $E$ to $F$ is weakly compact, for each $k\le n-1$.

\bigskip
For $\u_n$  a Banach ideal of $n$-homogeneous polynomials, we can define for each Banach spaces $E,F$,
$$
\u_{n-1}(E,F) = \left\{ P\in \mathcal \p^{n-1}(E,F) :\; \gamma P\in
\u_{n}(E,F) \textrm{ for all } \gamma\in E' \right\},
$$
with $\tri{P}_{\u_{n-1}(E,F)}=\sup_{\gamma\in
S_{E'}}\|\gamma P\|_{\u_{n}(E,F)}$. With some modifications to the results from previous section (see \cite{Mur10} for details) it may be proven that $\u_{n-1}$ is an ideal of $(n-1)$-homogenous polynomials and that $\tri{\cdot}_{\u_{n-1}}$ is an almost ideal norm, which can be modified to an equivalent ideal norm $\|\cdot\| _{\u_{n}(E,F)}$.
We can proceed analogously to define $\u_{n-2}$, and then $\u_{n-3},\dots,\u_1$. As a result, we have shown how to construct a (necessarily  unique) sequence of  polynomial ideals $\u_1,\dots,\u_{n-1}$ such that $\{\u_1,\dots,\u_{n-1},\u_{n}\}$ is a coherent sequence. Also, the polynomial ideals $\u_1,\dots,\u_{n-1}$ can be normed to obtain constants of coherence $1\le C,D\le e$.

We can also give a whole sequence $\{\u_k\}_{k=1}^\infty$ of polynomial ideals which form a coherent sequence. In this case the ideals $\u_k$, $k\ge n+1$ are not uniquely determined by $\u_n$. For example, we may define for each Banach spaces $E,F$, $k\ge1$,
$$
\u_{n+k}(E,F)=\{P\in\p^{n+k}(E,F):\, P_{a^k}\textrm{ belongs to }\u_n(E,F)\textrm{ for every }a\in E\},
$$
and $\|P\|_{\u_{n+k}(E,F)}=\sup\{\|P_{a^k}\|_{\u_n(E,F)}:\, \|a\|_E=1\}$. Then it is easy to see that $\u_{n+k}$ is an ideal of $(n+k)$-homogeneous polynomial and that $\|\cdot\|_{\u_{n+k}}$ is an ideal norm. Moreover, a simple modification of the proof of \cite[Proposition 2.5 $a$)]{CarDimMur09} shows that if $P\in\u_{n+k}(E,F)$ and $a\in E$ then $P_a$ belongs to $\u_{n+k-1}(E,F)$ and $\|P_a\|_{\u_{n+k-1}(E,F)}\le e\|a\|\|P\|_{\u_{n+k}(E,F)}$. Therefore we have the following.
\begin{theorem} \label{existecoherente}
Let $\u_{n}$ be a Banach ideal of $n$-homogeneous polynomials. Then
there exist polynomial ideals $\u_1,\dots,\u_{n-1},\u_{n+1},\dots$ such that $\{\u_k\}_{k=1}^\infty$ is a coherent sequence with constants $1\le C,D\le e$. The polynomial ideals $\u_1,\dots,\u_{n-1}$ are uniquely determined by $\u_{n}$.
\end{theorem}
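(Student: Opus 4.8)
The plan is to present $\u_n$ as the $n$-th member of a coherent sequence: the ideals of degree below $n$ are produced by a downward recursion that copies Section~\ref{seccion existe compatible}, the ideals of degree above $n$ are given by the explicit ``fix $k$ variables'' construction sketched before the statement, and the two conditions of Definition~\ref{deficoherente} are then checked link by link. \emph{Lower degrees.} Assume $\u_{m+1}$ (for some $m$ with $1\le m\le n-1$) is a Banach ideal of $(m+1)$-homogeneous polynomials; this holds for $m=n-1$ by hypothesis. Set, for Banach spaces $E,F$,
\[ \u_m(E,F)=\big\{P\in\p^m(E,F):\ \gamma P\in\u_{m+1}(E,F)\ \text{ for all }\gamma\in E'\big\},\qquad
\tri{P}_{\u_m(E,F)}=\sup_{\gamma\in S_{E'}}\|\gamma P\|_{\u_{m+1}(E,F)}. \]
I would then re-run Section~\ref{seccion existe compatible} with the operations $P\mapsto\gamma P$ and $P\mapsto P_a$ playing the roles of $T\mapsto\gamma^{n-1}T$ and $P\mapsto P_{a^{n-1}}$; at each point where Lemma~\ref{rrr} was used — for instance in Lemma~\ref{siunotodos} and Proposition~\ref{prop quasi ideal} — the same lemma, now applied to the degree $m+1$ ideal $\u_{m+1}$, supplies the needed polarization identity and estimate. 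The outcome is that $\u_m$ is a normed ideal of $m$-homogeneous polynomials, $\tri{\cdot}_{\u_m}$ is a complete ``almost ideal'' norm, and, by the analogues of Propositions~\ref{prop quasi ideal 2}--\ref{ideal norm} and Remark~\ref{norma prima simpli}, it can be replaced by an equivalent ideal norm $\|\cdot\|_{\u_m}$ with renorming constant $\kappa_m\in[1,e]$ (by \cite[Corollary~1.8]{CarDimMur09}), so that $\tri{\cdot}_{\u_m}\le\kappa_m\|\cdot\|_{\u_m}$. Iterating $m=n-1,n-2,\dots,1$ gives Banach polynomial ideals $\u_{n-1},\dots,\u_1$. (All technical verifications here are those of \cite{Mur10}.)

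With this in hand, coherence of $\{\u_1,\dots,\u_n\}$ is immediate link by link. If $P\in\u_m(E,F)$ and $\gamma\in E'$, then $\gamma P\in\u_{m+1}(E,F)$ by the very definition of $\u_m$ and $\|\gamma P\|_{\u_{m+1}}\le\|\gamma\|\sup_{\eta\in S_{E'}}\|\eta P\|_{\u_{m+1}}=\|\gamma\|\,\tri{P}_{\u_m}\le\kappa_m\|\gamma\|\,\|P\|_{\u_m}$, which is condition (ii) of Definition~\ref{deficoherente} with $D=\kappa_m\le e$. If $P\in\u_{m+1}(E,F)$ and $a\in E$, then for any Banach space $E_1$, $S\in S_{\l(E_1,E)}$ and $\gamma\in S_{E_1'}$, Lemma~\ref{rrr}($b$) applied to $\u_{m+1}$ yields $\gamma\,(P_a\circ S)\in\u_{m+1}(E_1,F)$ with norm $\le e\|a\|\,\|P\|_{\u_{m+1}}$; hence $P_a\in\u_m(E,F)$ and, using the simplified form of $\|\cdot\|'_{\u_m}$ from Remark~\ref{norma prima simpli}, $\|P_a\|_{\u_m}\le\frac{e}{\kappa_m}\|a\|\,\|P\|_{\u_{m+1}}$, which is condition (i) with $C=\frac{e}{\kappa_m}\le e$. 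Thus $\{\u_1,\dots,\u_n\}$ is coherent with $1\le C,D\le e$, and since a coherent sequence is determined by any one of its ideals \cite[Proposition~1.6]{CarDimMur09}, the ideals $\u_1,\dots,\u_{n-1}$ just built are the only possibility.

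\emph{Higher degrees.} For $k\ge1$ I would take
\[ \u_{n+k}(E,F)=\big\{P\in\p^{n+k}(E,F):\ P_{a^k}\in\u_n(E,F)\ \text{ for all }a\in E\big\},\qquad
\|P\|_{\u_{n+k}}=\sup_{\|a\|=1}\|P_{a^k}\|_{\u_n}. \]
From the identity $(S\circ P\circ T)_{a^k}=S\circ P_{(Ta)^k}\circ T$ and the $k$-homogeneity of $a\mapsto P_{a^k}$ one checks directly that $\u_{n+k}$ is a normed ideal of $(n+k)$-homogeneous polynomials, complete because $\|\cdot\|_{\p^{n+k}}\le\|\cdot\|_{\u_{n+k}}$. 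For coherence of $\{\u_k\}_{k\ge n}$: condition (i) of Definition~\ref{deficoherente} between $\u_{n+k-1}$ and $\u_{n+k}$ is the modification of \cite[Proposition 2.5 $a$)]{CarDimMur09} recalled above (and is trivial for $k=1$), with $C=e$. For condition (ii), take $P\in\u_{n+k-1}(E,F)$ and $\gamma\in E'$ and polarize:
\[ (\gamma P)_{b^k}=\tfrac1{n+k}\big(k\,\gamma(b)\,P_{b^{k-1}}+n\,\gamma\cdot P_{b^k}\big)\in\p^n(E,F). \]
Now $P_{b^{k-1}}\in\u_n(E,F)$ because $P\in\u_{n+k-1}$, while $P_{b^k}=(P_{b^{k-1}})_b\in\u_{n-1}(E,F)$ by condition (i) for the pair $(\u_{n-1},\u_n)$ just established, whence $\gamma\cdot P_{b^k}\in\u_n(E,F)$ by condition (ii) for that pair. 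So $(\gamma P)_{b^k}\in\u_n(E,F)$ for every $b$, i.e.\ $\gamma P\in\u_{n+k}(E,F)$; estimating each summand with $\|b\|=1$ and using that the constants $C_0,D_0$ of the pair $(\u_{n-1},\u_n)$ satisfy $C_0D_0\le e$ one gets $\|\gamma P\|_{\u_{n+k}}\le\frac{k+ne}{n+k}\|\gamma\|\,\|P\|_{\u_{n+k-1}}\le e\|\gamma\|\,\|P\|_{\u_{n+k-1}}$, so condition (ii) holds with $D=e$. (When $n=1$ there is no $\u_{n-1}$: then $P_{b^k}$ is simply a constant vector and $\gamma\cdot P_{b^k}$ a rank-one operator, so it lies in $\u_1$ for free and the same bound holds.) Combining the two halves, $\{\u_k\}_{k=1}^\infty$ is a coherent sequence with $1\le C,D\le e$.

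The only step that is not a routine adaptation is the first one: transporting the Section~\ref{seccion existe compatible} machinery to polynomial degree — namely, that $\u_m$ is closed under composition (the polynomial analogue of Lemma~\ref{siunotodos}, reducing an arbitrary functional on the domain to one of the form $\gamma\circ R$), that $\tri{\cdot}_{\u_m}$ satisfies the ``almost ideal'' inequality (the two successive Closed Graph Theorem arguments in the proof of Proposition~\ref{prop quasi ideal 2}), and that $(\u_m,\tri{\cdot}_{\u_m})$ is complete. I expect this to be the main obstacle; it is carried out, with the substitutions described above, in \cite{Mur10}. Everything else is polarization plus keeping track of constants, all of which, as shown, stay in $[1,e]$.
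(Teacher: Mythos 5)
Your proposal is correct and follows essentially the same route as the paper: the ideals of degree below $n$ are obtained by the downward ``division by functionals'' recursion transported from Section~\ref{seccion existe compatible} (with the technical verifications deferred, as in the paper, to \cite{Mur10}), and the ideals of degree above $n$ by the explicit ``fix $k$ variables'' definition. The only substantive difference is that you spell out the verification of condition (ii) of Definition~\ref{deficoherente} for the links above degree $n$ (via the polarization identity for $(\gamma P)_{b^k}$ and the already-established coherence of the pair $(\u_{n-1},\u_n)$), a step the paper leaves to a citation of \cite[Proposition 2.5]{CarDimMur09}; your identity and the resulting bound $\frac{k+ne}{n+k}\le e$ check out.
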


  The sequence $\{\u_k\}_{k=1}^\infty$ constructed is actually the largest sequence of ideals coherent with $\u_n$. That is, if $\{\mathfrak B_k\}_{k=1}^\infty$ is a coherent sequence such that $\mathfrak B_n=\u_n$, then $\mathfrak B_k(E,F)\subset \u_k(E,F)$ for every $E,F$,
$k\in\mathbb N$ (it is an equality for $k<n$).
It is also possible to define the smallest coherent sequence associated to $\u_n$, see
\cite[Section 2]{CarDimMur09} for a related construction.

It is clear that we can deduce the existence of compatible operator
ideal for every polynomial ideal from Theorem~\ref{existecoherente}.
However, is should be noted that the bounds for the compatibility
constants obtained with this theorem would be $e^n$, while in
Theorem~\ref{existcompat} we have $e$ as a bound.

\providecommand{\bysame}{\leavevmode\hbox
to3em{\hrulefill}\thinspace}
\providecommand{\MR}{\relax\ifhmode\unskip\space\fi MR }
\providecommand{\MRhref}[2]{%
  \href{http://www.ams.org/mathscinet-getitem?mr=#1}{#2}
} \providecommand{\href}[2]{#2}

\end{document}